\newcommand\cF{{\mathcal F}}
\newcommand\cH{{\mathcal H}}
\newcommand\cP{{\mathcal P}}
\theoremstyle{plain}
\newtheorem{theorem}{Theorem}[section]
\newtheorem{lemma}[theorem]{Lemma}
\newtheorem{conjecture}[theorem]{Conjecture}
\theoremstyle{definition}
\newcommand\lref[1]{Lemma~\ref{lem:#1}}
\newcommand\tref[1]{Theorem~\ref{thm:#1}}
\newcommand\cref[1]{Corollary~\ref{cor:#1}}
\newcommand\cjref[1]{Conjecture~\ref{conj:#1}}
\newcommand\sref[1]{Section~\ref{sec:#1}}
\title{A note on traces of set families}
\author{Bal\'azs Patk\'os\thanks{Alfr\'ed R\'enyi Institute of Mathematics, P.O.B. 127, Budapest H-1364, Hungary. Email: patkos@renyi.hu. Research supported by
    Hungarian National Scientific Fund, grant number: PD-83586 and the J\'anos Bolyai Research Scholarship of the Hungarian Academy of Sciences}}
\begin{document}
\maketitle
\begin{abstract}
A family of sets $\cF \subseteq 2^{[n]}$ is defined to be $l$-trace
$k$-Sperner if for any $l$-subset $L$ of $[n]$ the family of traces
$\cF|_L=\{F \cap L: F \in \cF\}$ does not contain any
chain of length $k+1$. In this paper we prove that for any positive integers $l',k$ with $l'<k$ if $\cF$ is $(n-l')$-trace $k$-Sperner, then $|\cF| \le (k-l'+o(1))\binom{n}{\lfloor n/2\rfloor}$ and this bound is asymptotically tight.
\end{abstract}

\textit{AMS subject classification}: 05D05

\textit{Keywords}: traces of set families, $k$-Sperner families, forbidden configurations

\vskip 0.5truecm


\section{Introduction}
We use standard notation. The set of the first $n$ positive integers is denoted by $[n]$. For a set $X$ the family of all subsets of $X$, all $i$-subsets of $X$, all subsets of $S$ of size at most $i$, all subsets of $S$ of size at least $i$ are denoted by $2^X, \binom{X}{i}, \binom{X}{\le i}, \binom{X}{\ge i}$, respectively.

\vskip 0.3truecm

Probably the very first theorem in extremal finite set theory is Sperner's result \cite{S} stating that if a family $\cF \subseteq 2^{[n]}$ does not contain two sets $F_1,F_2$ with $F_1 \subset F_2$, then the size of $\cF$ cannot exceed $\binom{n}{\lfloor n/2 \rfloor}$. Moreover, the only families attaining this size are $\binom{[n]}{\lfloor n/2 \rfloor}$ and, if $n$ is odd, $\binom{[n]}{\lceil n/2 \rceil}$. This theorem was generalized by Erd\H os \cite{E} in the following way: if a family $\cF \subseteq 2^{[n]}$ does not contain any chain $F_1 \subset F_2 \subset ... \subset F_k \subset F_{k+1}$ of length $k+1$ (families with this property are called \textit{$k$-Sperner families}), then the size of $\cF$ cannot exceed $\sum_{i=1}^k\binom{n}{\lfloor \frac{n-k}{2} \rfloor+i}$.

Another topic in extremal finite set theory deals with problems concerning \textit{traces} of set families. The trace of a set $F$ on another set $X$ is $F|_X=F \cap X$, while the trace of a family $\cF$ is $\cF|_X=\{F|_X: F\in\cF\}$. The fundamental theorem about traces is the so-called Sauer-lemma \cite{Sa,Sh,VC} that states that if $\cF\subseteq 2^{[n]}$ contains more than $\sum_{i=0}^{l-1}\binom{n}{i}$ sets, then there exists an $L \in \binom{[n]}{l}$ such that $\cF|_L=2^L$. As opposed to the situation described in Erd\H os's theorem, there are lots of different extremal families (see e.g. \cite{FQ}). In \cite{P}, the present author showed that $\binom{[n]}{\le l-1}$ and $\binom{[n]}{\ge n-l+1}$ are the only families $\cF$ of size $\sum_{i=0}^{l-1}\binom{n}{i}$ such that for all $L \in \binom{[n]}{l}$ the trace $\cF|_L$ does not contain any chain of length $l+1$ (i.e. maximal chains in $2^L$). This result led to the following definition: a family $\cF$ is said to be $l$-trace $k$-Sperner if for any $l$-set $L$ the trace $\cF|_L$ is $k$-Sperner. Let $f(n,k,l)$ denote the maximum size that an $l$-trace $k$-Sperner family $\cF \subseteq 2^{[n]}$ can have. In \cite{P}, it was also shown that for any pair of integers $k,l$ there exists $n_0(k,l)$ such that if $n \ge n_0$, then $f(n,k,l)=\sum_{i=0}^{k-1}\binom{n}{i}$.

The situation becomes totally different when one considers the problem of determining $f(n,k,n-l')$ with $k,l'$ fixed and $n$ large enough. Note that if $a \le |A| \le b$ holds, then for any $l'$-subset $L$ the size of $A|_{[n]\setminus L}$ lies between $a-l'$ and $b$. Therefore, as a chain contains sets of different sizes, the family $\bigcup_{i=1}^{k-l'}\binom{[n]}{a+i}$ is $(n-l')$-trace $k$-Sperner for any values of $a,k,l'$ and $n$. The following conjecture asserts that the largest $(n-l')$-trace $k$-Sperner family is of this sort if $n$ is large enough.

\begin{conjecture}
\label{conj:precise} Let $k$ and $l'$ be positive integers with $l'<k$. Then there exists $n_0=n_0(k,l')$ such that if $n \ge n_0$ and $\cF \subseteq 2^{[n]}$ is an $(n-l')$-trace $k$-Sperner family, then $|\cF| \le \sum_{i=1}^{k-l'}\binom{n}{\lfloor\frac{n-(k-l')}{2}+ i\rfloor}$. 
\end{conjecture}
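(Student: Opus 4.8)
The lower bound is already delivered by the family $\bigcup_{i=1}^{k-l'}\binom{[n]}{a+i}$ with $a=\lfloor\frac{n-(k-l')}{2}\rfloor$, whose size is exactly the claimed right-hand side, so the whole content is the matching upper bound. I would first record the clean reformulation: writing $\pi_L(F)=F\cap([n]\setminus L)$ for the projection that forgets the coordinates in $L$, the hypothesis says precisely that for every $L\in\binom{[n]}{l'}$ the image $\pi_L(\cF)\subseteq 2^{[n]\setminus L}$ is $k$-Sperner. It is worth noting at the outset that the target quantity is exactly the Erd\H{o}s bound for $(k-l')$-Sperner families; nevertheless one cannot simply argue that $\cF$ is $(k-l')$-Sperner, because any single chain of length $k-l'+1$ is $(n-l')$-trace $k$-Sperner (its every trace has at most $k-l'+1\le k$ sets) yet is not $(k-l')$-Sperner. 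Thus the two extremal classes coincide only in size, and a genuinely new counting input is required.

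\textbf{Chains and fibres.}
I would estimate $|\cF|$ by double counting over maximal chains of the projected cubes. Fix $L$ and a maximal chain $\mathcal B=(B_0\subset\cdots\subset B_{n-l'})$ in $2^{[n]\setminus L}$; the hypothesis forces $|\{j:\pi_L^{-1}(B_j)\cap\cF\ne\emptyset\}|\le k$, i.e. $\cF$ meets only $k$ of the $n-l'+1$ prisms $\pi_L^{-1}(B_j)=\{B_j\cup S:S\subseteq L\}$. Since each prism carries at most $2^{l'}$ members of $\cF$, this already yields the crude estimate $|\cF|\le 2^{l'}k\binom{n-l'}{\lfloor(n-l')/2\rfloor}\sim k\binom{n}{\lfloor n/2\rfloor}$, which is off by exactly the factor I must remove. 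The saving of $l'$ should come from the observation that a \emph{full} prism produces a chain $B_j\cup S_0\subset\cdots\subset B_j\cup S_{l'}$ of length $l'+1$ inside $\cF$; feeding such chains, together with the $k$ occupied levels, into the trace condition for a \emph{second} set $L'$ shows that prisms cannot be simultaneously numerous and nearly full. Quantifying this trade-off, ideally by weighting each $F\in\cF$ by $1/\binom{n}{|F|}$ and averaging the per-chain inequality over $L$ and over $\mathcal B$, is how I expect the constant to drop from $k$ to $k-l'$.

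\textbf{Exactness and the main obstacle.}
To replace $\sim(k-l')\binom{n}{\lfloor n/2\rfloor}$ by the exact sum of the $k-l'$ middle binomial coefficients I would upgrade the averaging to the symmetric chain decomposition of de Bruijn–Tengbergen–Kruyswijk: short symmetric chains must contribute strictly less, and summing $\min(k-l',\,\mathrm{length})$ over the decomposition reproduces the middle-binomial sum exactly as in Erd\H{o}s's theorem. The crux, and the step I expect to be the main obstacle, is the tight \emph{local} inequality that (for $n$ large) each symmetric chain, or each bundle of $2^{l'}$ symmetric chains closed under the fibre translations $S\subseteq L$, meets $\cF$ in at most $k-l'$ points; the difficulty is precisely the configurations that are \emph{not} chains in $\cF$ but become chains under some $\pi_L$, since these are invisible to a naive chain partition and are exactly what separates the $(n-l')$-trace $k$-Sperner class from the $(k-l')$-Sperner class. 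A natural alternative is a compression argument: show that a suitable shift $S_{ij}$ preserves the $(n-l')$-trace $k$-Sperner property, reduce to a compressed (hence nearly level-determined) family, and compute the maximum directly. There the analogous obstacle is verifying that compression never creates a forbidden projected chain, which is the delicate lower-order phenomenon responsible for pinning down the precise constant.
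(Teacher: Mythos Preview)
The statement you are attempting is \emph{Conjecture}~\ref{conj:precise}, and the paper does not prove it; it is stated as an open problem, and only the asymptotic version $|\cF|\le(k-l'+o(1))\binom{n}{\lfloor n/2\rfloor}$ (Theorem~\ref{thm:main}) is established. So there is no ``paper's own proof'' to match, and any proposal that actually proved the exact bound would be a new theorem.

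Your write-up is not a proof but a strategy with self-identified gaps. The only inequality you derive rigorously is the crude $|\cF|\le 2^{l'}k\binom{n-l'}{\lfloor(n-l')/2\rfloor}\sim k\binom{n}{\lfloor n/2\rfloor}$, which is weaker even than the paper's asymptotic result. The step that would drop the constant from $k$ to $k-l'$ --- ``feeding such chains, together with the $k$ occupied levels, into the trace condition for a second set $L'$'' --- is not carried out, and it is not clear how averaging over a second $L'$ interacts with the chain $\mathcal B$ already fixed in $2^{[n]\setminus L}$. Likewise, the symmetric-chain upgrade you propose for exactness hinges on a ``tight local inequality'' that you explicitly flag as the main obstacle and do not prove; the compression alternative is left at the same level. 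In short, the proposal correctly locates the difficulty but does not resolve it.

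For comparison, the paper's route to the asymptotic bound is quite different from your chain/fibre double count. It splits $\cF$ by repeatedly removing roots of copies of the tree poset $P_{k-l'+1,2^{l'}}$: the removed sets form an $(n-l')$-trace $l'$-Sperner family, shown to have size $O_{l'}(n^{-1/3}\binom{n}{\lfloor n/2\rfloor})$ via a tight-path argument on each level, while the remainder is $P_{k-l'+1,2^{l'}}$-free and hence of size $(k-l'+O(1/n))\binom{n}{\lfloor n/2\rfloor}$ by Bukh's theorem. The paper itself remarks that obtaining the exact constant of Conjecture~\ref{conj:precise} seems to require ideas beyond forbidden-subposet bounds, which is consistent with the obstacles you identify.
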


Note that if true, the bound in \cjref{precise} is best possible as shown by the family $\bigcup_{i=1}^{k-l'}\binom{[n]}{\lfloor\frac{n-(k-l')}{2}+ i\rfloor}$. In \cite{P} it was shown that \cjref{precise} holds asymptotically when $l'=1,k=2$. The main result of this paper verifies \cjref{precise} asymptotically for all values of $k$ and $l'$.

\begin{theorem}
\label{thm:main} Let $k$ and $l'$ be positive integers with $l'<k$. Then if $\cF \subseteq 2^{[n]}$ is an $(n-l')$-trace $k$-Sperner family, then $|\cF| \le (k-l'+o(1))\binom{n}{\lfloor n/2\rfloor}$.
\end{theorem}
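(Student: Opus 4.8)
The plan is to prove the upper bound by induction on $k+l'$, splitting the family along a single coordinate. Write $\cF_0=\{F\in\cF:n\notin F\}$ and $\cF_1=\{F\setminus\{n\}:F\in\cF,\ n\in F\}$, both regarded as subfamilies of $2^{[n-1]}$, so that $|\cF|=|\cF_0\cup\cF_1|+|\cF_0\cap\cF_1|$. The whole point of the splitting is that each of the two pieces is again a trace-Sperner family for a \emph{strictly smaller} parameter pair, to which the induction hypothesis applies, and that the ``budget'' $k-l'$ is split evenly between the two pieces.

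First I would verify the two structural claims. For the union, $\cF_0\cup\cF_1$ is $\big((n-1)-(l'-1)\big)$-trace $k$-Sperner in $2^{[n-1]}$: for any $S'\in\binom{[n-1]}{l'-1}$ one checks directly that $(\cF_0\cup\cF_1)|_{[n-1]\setminus S'}=\cF|_{[n]\setminus(S'\cup\{n\})}$, which is $k$-Sperner because $|S'\cup\{n\}|=l'$. For the intersection, $\cF_0\cap\cF_1=\{G\subseteq[n-1]:G\in\cF\text{ and }G\cup\{n\}\in\cF\}$ is $\big((n-1)-l'\big)$-trace $(k-1)$-Sperner; here the key trick is that given $L\in\binom{[n-1]}{n-1-l'}$ and a putative chain $G_0|_L\subsetneq\cdots\subsetneq G_{k-1}|_L$ in $(\cF_0\cap\cF_1)|_L$, one sets $L^*=L\cup\{n\}$ (an $(n-l')$-set) and observes that $G_0|_{L^*},\ldots,G_{k-1}|_{L^*}$ together with $G_{k-1}|_{L^*}\cup\{n\}$ (the trace of $G_{k-1}\cup\{n\}\in\cF$) form a chain of length $k+1$ in $\cF|_{L^*}$, contradicting that $\cF$ is $(n-l')$-trace $k$-Sperner.

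Writing $g(k,l')=\limsup_n f(n,k,n-l')/\binom{n}{\lfloor n/2\rfloor}$ and using $\binom{n}{\lfloor n/2\rfloor}=(2+o(1))\binom{n-1}{\lfloor(n-1)/2\rfloor}$, the two claims yield
\[
g(k,l')\le \tfrac12\big(g(k,l'-1)+g(k-1,l')\big).
\]
The base cases are $l'=0$, where the hypothesis says $\cF$ is $k$-Sperner and Erd\H os's theorem gives $g(k,0)\le k$, and the diagonal $l'=k$. Since the walk from $(k,l')$ obtained by decreasing coordinates only ever reaches the axis $l'=0$ or the diagonal $l'=k$, and since $k-l'$ satisfies the recursion with equality once one imposes the diagonal values $g(m,m)=0$, one gets $g(k,l')\le k-l'$; as $k,l'$ are fixed the recursion has bounded depth, so the accumulated $o(1)$ terms stay $o(1)$. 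Everything therefore reduces to the \emph{diagonal lemma}: an $(n-m)$-trace $m$-Sperner family has size $o\!\big(\binom{n}{\lfloor n/2\rfloor}\big)$.

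The diagonal lemma is the main obstacle, and it genuinely cannot come from the recursion itself, which only yields $g(m,m)\le\tfrac12$. For $m=1$ it is clean: if $\cF$ is $(n-1)$-trace $1$-Sperner then no two distinct $F,G\in\cF$ can satisfy $\min(|F\setminus G|,|G\setminus F|)\le 1$, since for large $n$ such a pair would give a $2$-chain in some trace $\cF|_{[n]\setminus\{x\}}$; hence any two members lie at Hamming distance at least $4$, and sphere-packing gives $|\cF|\le 2^n/(n+1)=o\!\big(\binom{n}{\lfloor n/2\rfloor}\big)$. For general $m$ no single pair is forbidden, and the hard part is to show that the trace condition still rules out enough local configurations — namely $m+1$ members lying in a small Hamming ball and interleaving into a chain of length $m+1$ in some $\cF|_{[n]\setminus S}$ — to force a bound of the shape $2^n/\mathrm{poly}(n)$. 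The natural route is a density argument: if $|\cF|$ were of order $\binom{n}{\lfloor n/2\rfloor}$, then for a random $m$-set $S$ and a random near-balanced trace the sheer number of members should produce such an interleaving chain, contradicting the $m$-Sperner property of $\cF|_{[n]\setminus S}$. Making this interleaving count effective, rather than the (essentially routine) splitting step, is where the real work lies.
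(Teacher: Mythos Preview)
Your coordinate-splitting recursion is correct and, for the reduction step, more elementary than the paper's. Both structural claims check out: the identity $(\cF_0\cup\cF_1)|_{[n-1]\setminus S'}=\cF|_{[n]\setminus(S'\cup\{n\})}$ is immediate, and the chain-extension trick for $\cF_0\cap\cF_1$ (appending the trace of $G_{k-1}\cup\{n\}\in\cF$ on $L^*=L\cup\{n\}$) is valid. The inequality $g(k,l')\le\tfrac12\bigl(g(k,l'-1)+g(k-1,l')\bigr)$ together with $g(k,0)\le k$ and $g(m,m)=0$ really does give $g(k,l')\le k-l'$ by a bounded-depth induction. The paper, by contrast, does not induct on $l'$ at all: it proves in one shot that $f(n,k,n-l')\le f(n,l',n-l')+La(n,P_{k-l'+1,2^{l'}})$ by repeatedly deleting roots of copies of a tree poset $P_{k-l'+1,2^{l'}}$ from $\cF$, and then invokes Bukh's theorem to evaluate $La(n,P_{k-l'+1,2^{l'}})=(k-l'+o(1))\binom{n}{\lfloor n/2\rfloor}$. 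Your route avoids the forbidden-subposet machinery entirely, which is a genuine simplification.

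There is, however, a real gap, and you name it yourself: the diagonal lemma $g(m,m)=0$ is not proved for $m\ge 2$. Your $m=1$ sphere-packing argument is fine (so your proof is in fact complete when $l'=1$, since the recursion then only touches the diagonal at $(1,1)$), but for general $m$ the density sketch you offer is not a proof --- ``making this interleaving count effective \ldots\ is where the real work lies'' is an honest admission that the argument is missing. The paper supplies precisely this piece (its \tref{weakbase}) by a route you do not anticipate: it restricts to a single layer $\cF_i=\cF\cap\binom{[n]}{i}$, observes that an $(n-l')$-trace $l'$-Sperner $i$-uniform family cannot contain a \emph{tight path} $H_1,\dots,H_{l'+1}$ (sets with $|H_j\cap H_{j+1}|=i-1$ and the new element of $H_{j+1}$ outside $\bigcup_{a\le j}H_a$), proves by induction on $l'$ that tight-path-free $i$-uniform families have size $O_{l'}\bigl(\tfrac1i\binom{n}{i-1}\bigr)$, and then sums over the $O(n^{2/3})$ layers near $n/2$ to obtain $|\cF|=O_{l'}\bigl(n^{-1/3}\binom{n}{\lfloor n/2\rfloor}\bigr)$. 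This is a concrete combinatorial count, not a random-trace argument, and without it --- or an equivalent --- your proposal is incomplete for every $l'\ge 2$.
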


The rest of the paper is organized as follows: in \sref{lanp}, we briefly summarize the problem of forbidden subposets in set families (for recent survey-like papers see \cite{K1,K2} and for the most recent results see \cite{GLL}) and state a result of Bukh \cite{B} that will be used in the proof of \tref{main}. In \sref{proof}, we obtain a result about $f(n,l',n-l')$ and another one about the connection of $f(n,l',n-l')$ and $f(n,k,n-l')$. These two results will immediately imply \tref{main}. \sref{rem} contains some concluding remarks and open problems.

\section{Families with forbidden subposets}
\label{sec:lanp}

The aim of this section is to describe the context of forbidden subposets, introduce some terminology and to state \tref{bukh} that will serve as the main tool in proving \tref{main}.

We say that a family $\cF$ of sets contains a poset $P$ if there is an injective mapping $i: P \rightarrow \cF$ such that whenever $p \leqslant_P q$ holds, then $i(p)$ is contained in $i(q)$. We say that $\cF$ is \textit{$P$-free} if it does not contain $P$.
For any set
$\cP$ of posets $La(n,\cP)$ denotes the maximum size that a family $\cF
\subseteq 2^{[n]}$ can have such that $\cF$ is $P$-free for all $P \in
\cP$. If $\cP$ consists of a single poset $P$, we write $La(n,P)$ instead of $La(n,\{P\})$.
With this notation Sperner's theorem determines $La(n,P_2)$ and Erd\H os's theorem determines $La(n,P_{k+1})$, where $P_k$ denotes the poset consisting of a chain of length $k$. In these theorems, $La(n,P_k)$ is attained at a union of consecutive levels of $2^{[n]}$. It is natural to conjecture that something similar is true for all posets. For a poset $P$ let $l(P)$ denote the largest integer $l$ such that for any $n$, no $l$ consecutive levels of $2^{[n]}$ contain $P$. The following conjecture is folklore.

\begin{conjecture}
\label{conj:poset}
Let $P$ be a finite poset. Then $La(n,P)=(l(P)+\frac{1}{n})\binom{n}{\lfloor n/2 \rfloor }$.
\end{conjecture}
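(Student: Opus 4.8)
The statement splits into a lower bound and an upper bound, and essentially all of the difficulty lies in the latter. The lower bound is immediate from the definition of $l(P)$: taking the $l(P)$ central levels of $2^{[n]}$ produces a $P$-free family by definition of $l(P)$, and since each of the $l(P)$ central binomial coefficients equals $(1-o(1))\binom{n}{\lfloor n/2\rfloor}$, its size is $(l(P)-o(1))\binom{n}{\lfloor n/2\rfloor}$. I note in passing that the displayed equality cannot hold exactly for every $n$ — already for $P=P_2$ Sperner's theorem gives $La(n,P_2)=\binom{n}{\lfloor n/2\rfloor}$ exactly, not $(1+\tfrac1n)\binom{n}{\lfloor n/2\rfloor}$ — so I read the claim as the assertion that $La(n,P)/\binom{n}{\lfloor n/2\rfloor}\to l(P)$, with $\tfrac1n$ recording the predicted lower-order correction, and my goal is to pin down the leading coefficient $l(P)$.

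For the upper bound the first tool to reach for is the chain (Lubell) method. Writing $\lambda(\cF)=\sum_{F\in\cF}\binom{n}{|F|}^{-1}=\mathbb{E}\,|\cF\cap\cC|$ for a uniformly random maximal chain $\cC$, one has $|\cF|\le\lambda(\cF)\binom{n}{\lfloor n/2\rfloor}$, so it would suffice to prove $\lambda(\cF)\le l(P)+o(1)$ for every $P$-free $\cF$. Since a single maximal chain only ever contains a chain subposet, the naive one-chain argument is useless once $P$ is not itself a chain, and the real work would be to replace it by a partition of $2^{[n]}$ into (symmetric) chains, or by a double count over many long chains, in which the $P$-free hypothesis is used to limit how densely $\cF$ can sit. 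This programme succeeds in the two classical cases: Erd\H os's theorem is the instance $P=P_{k+1}$ (where $l(P)=k$), and the argument can be carried through for every poset whose Hasse diagram is a tree — precisely the theorem of Bukh \cite{B} quoted in \sref{lanp}, which settles \cjref{poset} for tree posets.

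The main obstacle, and the reason the conjecture remains open in general, is that $l(P)$ is a genuinely subtle parameter to which the chain method is essentially blind. It need not equal $\mathrm{height}(P)-1$: for the complete two-level poset $K_{2,2}$ (two minimal and two maximal elements, each minimal below each maximal) the height is $2$ yet $l(K_{2,2})=2$, since two distinct top sets $B_1,B_2$ of a common size $m+1$ satisfy $|B_1\cap B_2|=m$, so two distinct common bottom sets of size $m$ contained in both cannot coexist. A random chain records only a chain subfamily and cannot detect this extra ``width,'' so the bound the plain Lubell argument extracts is in general strictly larger than $l(P)$. The canonical unresolved case is the diamond $\mathbb{B}_2$ (with $l(\mathbb{B}_2)=2$), where the chain method yields roughly $2.25\binom{n}{\lfloor n/2\rfloor}$ and the conjectured constant $2$ has resisted all attempts. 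Consequently I would not expect a complete proof from present techniques; the realistic plan is to combine refined incidence counts (flag-algebra or linear-programming bounds on the number of copies of $P$ forced onto the chains) with a stability analysis pushing any near-extremal $\cF$ onto $l(P)$ consecutive central levels, and to treat \cjref{poset} as a hard open problem for which only the leading-order lower bound and isolated families (chains, tree posets) are currently known.
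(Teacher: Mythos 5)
You were handed \cjref{poset}, which the paper itself presents as a folklore \emph{open} conjecture: the paper contains no proof of it and only records Bukh's theorem (\tref{bukh}) settling the case where $H(P)$ is a tree. Your proposal takes exactly the same stance, so relative to the paper there is no gap to report: you correctly refrain from claiming a proof, you supply the one direction that is genuinely easy (the lower bound from the $l(P)$ central levels, which is correct as written), and you correctly attribute the tree case to Bukh. Two smaller points are also to your credit and worth keeping: first, your observation that the statement cannot be literally true --- already $La(n,P_2)=\binom{n}{\lfloor n/2\rfloor}$ by Sperner, whereas the conjecture as printed would demand $(1+\frac{1}{n})\binom{n}{\lfloor n/2\rfloor}$ --- correctly diagnoses a typo; the intended error term is $O(\frac{1}{n})$, matching the form of \tref{bukh}. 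Second, your point that $l(P)$ need not equal $h(P)-1$, illustrated with the two-level poset $K_{2,2}$, accurately explains why the chain/Lubell method alone cannot reach the conjectured constant in general; your computation there has one tiny slip (two distinct $(m+1)$-sets satisfy $|B_1\cap B_2|\le m$, not necessarily $=m$), but the conclusion $l(K_{2,2})=2$ stands, since any two common $m$-subsets would both have to equal $B_1\cap B_2$. Your description of the diamond as the canonical unresolved case, with the plain chain method stuck above the conjectured constant $2$, is likewise consistent with the state of the art. In short: the proposal is as correct as a ``proof'' of an open conjecture can be, and it aligns with everything the paper asserts about \cjref{poset}.
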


The \textit{Hasse graph} $H(P)$ of a poset $P$ is a directed graph 
with vertex set $P$ and $(p,q)$ is an arc if and only if $p \prec_P q$ (i.e. $p <_P q$ and there does not exist $r \in P$ with $p <_P r <_P q$).
The height $h(P)$ of a poset is the length of the longest chain in $P$. It is easy to verify that if $H(P)$ is a tree, then $l(P)=h(P)-1$. \cjref{poset} was proved by Bukh for all posets $P$ with $H(P)$ being a tree.

\begin{theorem} [Bukh, \cite{B}]
\label{thm:bukh}
Let $P$ be a finite poset such that $H(P)$ is a tree. Then $La(n,P)=(h(P)-1+O(\frac{1}{n}))\binom{n}{\lfloor n/2 \rfloor }$.
\end{theorem}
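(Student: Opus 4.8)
The plan is to establish matching lower and upper bounds; the lower bound is immediate and all the difficulty lies in the upper bound. For the lower bound, observe that since $H(P)$ is a tree of height $h(P)$, the poset $P$ contains the chain $P_{h(P)}$ as a subposet, so any subfamily isomorphic to $P$ must occupy at least $h(P)$ distinct levels of $2^{[n]}$. Consequently the union of the $h(P)-1$ central levels of the cube is $P$-free, and its cardinality is $(h(P)-1-o(1))\binom{n}{\lfloor n/2\rfloor}$, which already matches the asserted value up to the $O(1/n)$ term. Thus it remains to prove $La(n,P)\le (h(P)-1+O(1/n))\binom{n}{\lfloor n/2\rfloor}$.

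For the upper bound I would start from the Lubell (chain-counting) identity: averaging $|\cF\cap\cC|$ over the $n!$ maximal chains $\cC$ of $2^{[n]}$ equals $\sum_{F\in\cF}\binom{n}{|F|}^{-1}$, and any bound $\sum_{F\in\cF}\binom{n}{|F|}^{-1}\le c$ yields $|\cF|\le c\binom{n}{\lfloor n/2\rfloor}$. The obstacle is that this naive average is too weak: a $P$-free family can still meet a single maximal chain in far more than $h(P)$ sets once $H(P)$ branches. For instance, if $P$ is the ``claw'' of three maximal elements over a common minimum, then no $4$-chain is forbidden, so the average only yields the constant $3$ rather than the truth $1$. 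The whole point is to recover the sharp constant $h(P)-1$.

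The idea I would exploit is local expansion in the middle of the cube: a set $F$ with $|F|$ within $O(\sqrt{n})$ of $n/2$ has $\Theta(n)$ covers and $\Theta(n)$ co-covers. I would organise the argument as an induction on $|P|$, with base case $P=P_{h(P)}$ supplied by Erd\H os's theorem \cite{E} (and Sperner's theorem \cite{S} when $h(P)=2$). For the inductive step, choose a leaf $v$ of $H(P)$ whose neighbour $u$ has degree at least $3$ (such a leaf exists unless $P$ is a path), pass to the order-dual if necessary so that $v$ is a maximal element covering $u$, and set $P'=P-v$, which is again a tree. Given a $P$-free family $\cF$, consider the copies of $P'$ inside $\cF$ whose image of $u$ lies near the central levels; each such image has $\Theta(n)$ covers, and if a positive proportion of these covers belonged to $\cF$ we could extend the copy of $P'$ to a copy of $P$, a contradiction. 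Converting this into a density statement shows that the presence of the extra branch $v$ forces exactly the saving that reduces the trivial chain-length constant to $h(P)-1$.

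The step I expect to be the main obstacle is precisely this inductive reduction: one must track how $h(P)$ changes when the leaf $v$ is deleted (it drops by one exactly when $v$ lies on every longest chain of $P$) and prove the corresponding transfer inequality relating $La(n,P)$ to $La(n,P')$, pushing the local-expansion embedding through a careful first- or second-moment computation over random maximal chains. Controlling the error uniformly, so that it accumulates to only $O(1/n)$ across the whole induction rather than degrading at each of the $|P|$ steps, is the delicate quantitative heart of the proof.
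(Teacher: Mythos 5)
This statement is quoted by the paper from Bukh's article \cite{B} and is given no proof in the paper itself, so your sketch can only be judged on its own terms --- and on those terms it has a genuine gap. Your lower bound is correct (a chain of length $h(P)$ needs $h(P)$ distinct set sizes, so $h(P)-1$ consecutive middle levels are $P$-free and have total size $(h(P)-1-O(\frac1n))\binom{n}{\lfloor n/2\rfloor}$). But for the upper bound, the entire content of the theorem is concentrated in the step you explicitly defer: the ``transfer inequality'' relating $La(n,P)$ to $La(n,P')$ for $P'=P-v$. Note that the extension step itself is trivial under the paper's notion of poset containment (no non-inclusions need to be preserved): if $v$ is a maximal leaf covering $u$, then \emph{any} cover $W$ of the image of $u$ that differs from the other $|P'|$ images completes a copy of $P'$ to a copy of $P$, so a $P$-free family forces every $u$-image of every copy of $P'$ to have fewer than $|P|$ covers inside the family. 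The theorem is therefore equivalent to the supersaturation statement that a family of size exceeding $(h(P)-1+\frac{C}{n})\binom{n}{\lfloor n/2\rfloor}$ contains a copy of $P'$ whose $u$-image sits near the middle and has many covers in the family --- and that is precisely what you flag as ``the main obstacle'' without proving. A plan that postpones its one nontrivial step is not a proof.

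There is also a concrete false claim in your inductive setup: ``choose a leaf $v$ of $H(P)$ whose neighbour $u$ has degree at least $3$ (such a leaf exists unless $P$ is a path).'' This fails for a spider --- three paths of length two glued at a common centre is not a path, yet every leaf's neighbour has degree $2$. So you would have to delete an arbitrary leaf, and then in general $h(P-v)=h(P)$, in which case the induction yields no saving at all and you need $La(n,P)\le La(n,P-v)+O(\frac1n)\binom{n}{\lfloor n/2\rfloor}$, which is again the missing supersaturation step. For what it is worth, Bukh's actual argument is indeed in the spirit you describe --- chain averaging via the Lubell function, restriction to a middle band, and induction on the tree --- but its heart is a quantitative embedding lemma (any family in the middle band with Lubell mass exceeding $h(P)-1+\frac{C}{n}$ contains the tree poset, proved by a careful greedy embedding along maximal chains), i.e.\ exactly the computation your sketch omits. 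One minor point in your favour: the worry about errors degrading ``at each of the $|P|$ steps'' is unfounded, since $|P|$ is a constant and per-step $O(\frac1n)$ losses aggregate to $O(\frac1n)$ regardless.
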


\section{Proof of \tref{main}}
\label{sec:proof}

In this section we prove \tref{main}. To be able to use \tref{bukh}, we need to define the following directed graph: $T_{h,c}$ is a tree with height $h$ such that all arcs are directed towards the root and each vertex, with the exception of the leaves, has exactly $c$ children. Let $P_{h,c}$ denote the poset with $H(P_{h,c})=T_{h,c}$. The following two theorems immediately yield \tref{main}.

\begin{theorem}
\label{thm:induction} Let $k,l'$ be positive integers with $l'<k$. Then the following inequality holds:
\[
f(n,k,n-l') \le f(n,l',n-l')+La(n,P_{k-l'+1,2^{l'}}).
\] 
\end{theorem}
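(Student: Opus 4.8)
The plan is to split $\cF$ into two parts $\cG$ and $\cH = \cF \setminus \cG$ so that $\cG$ is itself $(n-l')$-trace $l'$-Sperner (hence $|\cG| \le f(n,l',n-l')$) and $\cH$ is $P_{k-l'+1,2^{l'}}$-free (hence $|\cH| \le La(n,P_{k-l'+1,2^{l'}})$); adding the two bounds gives the theorem. The split is governed by a height function measuring, for each set, how far down a chain it can sit after deleting $l'$ coordinates.

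Concretely, for $F \in \cF$ I would define
\[ g(F) = \max_{S \in \binom{[n]}{l'}} \big(\text{length of the longest chain in } \cF|_{[n]\setminus S} \text{ whose least element is } F|_{[n]\setminus S}\big). \]
Since $\cF$ is $(n-l')$-trace $k$-Sperner, no chain in any trace $\cF|_{[n]\setminus S}$ has length exceeding $k$, so $1 \le g(F) \le k$. Set $\cG = \{F : g(F) \le l'\}$ and $\cH = \{F : g(F) \ge l'+1\}$. The family $\cG$ is $(n-l')$-trace $l'$-Sperner almost by definition: if some $\cG|_{[n]\setminus S}$ contained a chain $E_0 \subsetneq \cdots \subsetneq E_{l'}$ of length $l'+1$, then choosing $G \in \cG$ with $G|_{[n]\setminus S} = E_0$ exhibits a chain of length $l'+1$ in $\cF|_{[n]\setminus S}$ with least element $G|_{[n]\setminus S}$, forcing $g(G) \ge l'+1$, a contradiction.

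The hard part is showing that $\cH$ is $P_{k-l'+1,2^{l'}}$-free, and this is exactly where the branching number $2^{l'}$ enters. Suppose $\cH$ contained a copy of $P_{k-l'+1,2^{l'}}$ and let $R$ be the image of its root (the largest set). Because $R \in \cH$ we have $g(R) \ge l'+1$, so there is a set $S \in \binom{[n]}{l'}$ and a chain $R|_{[n]\setminus S} = E_0 \subsetneq E_1 \subsetneq \cdots \subsetneq E_{l'}$ in $\cF|_{[n]\setminus S}$. I would then descend the embedded tree from $R$ using this fixed $S$: at a node realized by a set $A$, a child realized by $A' \subsetneq A$ fails to drop the trace (i.e. $A'|_{[n]\setminus S} = A|_{[n]\setminus S}$) exactly when $A \setminus S \subseteq A' \subseteq A$, and there are at most $2^{|A \cap S|}-1 \le 2^{l'}-1$ proper subsets of $A$ of this form. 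As the node has $2^{l'}$ children, at least one child strictly decreases the trace; following such children yields a root-to-leaf path $R = A_0 \supsetneq A_1 \supsetneq \cdots \supsetneq A_{k-l'}$ with $A_{k-l'}|_{[n]\setminus S} \subsetneq \cdots \subsetneq A_0|_{[n]\setminus S} = R|_{[n]\setminus S}$, a chain of $k-l'+1$ distinct traces. Gluing this beneath the chain $E_0 \subsetneq \cdots \subsetneq E_{l'}$ produces a chain of length $(k-l'+1)+l' = k+1$ in $\cF|_{[n]\setminus S}$, contradicting the $(n-l')$-trace $k$-Sperner property.

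With both parts in hand the inequality $|\cF| = |\cG| + |\cH| \le f(n,l',n-l') + La(n,P_{k-l'+1,2^{l'}})$ follows. I expect the collapse-avoiding descent to be the crux: it is the only place the precise branching $2^{l'}$ is used, and it is what converts the tree's height $k-l'+1$ together with the $l'$ deleted coordinates back into a forbidden chain of the full length $k+1$. The hypothesis $l'<k$ guarantees $k-l'+1 \ge 2$, so the poset in question is nondegenerate.
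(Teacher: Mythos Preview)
Your argument is correct and rests on the same core idea as the paper: the pigeonhole ``descent'' through the $2^{l'}$-branching tree, where among $2^{l'}$ distinct proper subsets of $A$ at most $2^{l'}-1$ can keep the trace on $[n]\setminus S$ unchanged, so some child strictly drops the trace. The gluing of the resulting $(k-l'+1)$-chain below with the $(l'+1)$-chain above to reach length $k+1$ is exactly what the paper does as well.

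Where you differ is in how the family is split. The paper proceeds procedurally: it repeatedly removes the root of a copy of $P_{k-l'+1,2^{l'}}$ until none remain; the remaining family is $P$-free by construction, and each removed set is (at the moment of its removal) the root of a $P$-copy in $\cF$. If more than $f(n,l',n-l')$ sets were removed, the removed family has an $(l'+1)$-chain in some trace, and the bottom of that chain supplies the $P$-copy to descend through. You instead define the split intrinsically via the height function $g$, making $\cG$ trace-$l'$-Sperner by definition and proving $\cH$ is $P$-free by exhibiting the forbidden $(k+1)$-chain directly from any hypothetical $P$-copy in $\cH$. Your packaging is a bit cleaner (no iterative process, no ``at the time of removal'' bookkeeping), while the paper's version makes it visually clear why the removed sets are precisely those with a tree hanging below them; either way the substantive combinatorics is identical.
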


\begin{theorem}
\label{thm:weakbase} For any positive integer $l'$, the size of an $(n-l')$-trace $l'$-Sperner family $\cF \subseteq 2^{[n]}$ is $O_{l'}(n^{-1/3}\binom{n}{\lfloor n/2\rfloor})$.
\end{theorem}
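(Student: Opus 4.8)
The plan is to bound $\cF$ level by level. Write $\cF_m=\cF\cap\binom{[n]}{m}$. First I would record the basic structural consequence of the hypothesis: if $\cF$ contained a genuine chain $F_0\subsetneq\cdots\subsetneq F_{l'}$ with $|F_0|\ge l'$, then choosing any $S\in\binom{F_0}{l'}$ we have $S\subseteq F_0\subseteq F_i$ for every $i$, so the successive differences $F_{i+1}\setminus F_i$ are disjoint from $S$ and $F_0\setminus S\subsetneq\cdots\subsetneq F_{l'}\setminus S$ is a chain of length $l'+1$ in $\cF|_{[n]\setminus S}$, contradicting the hypothesis; the complementary choice $S\subseteq[n]\setminus F_{l'}$ rules out chains with $|F_{l'}|\le n-l'$. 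Hence, after discarding the at most $2\sum_{i=0}^{l'-1}\binom{n}{i}=O(n^{l'-1})$ sets of size $<l'$ or $>n-l'$, the remaining family is $l'$-Sperner, which already yields the weak bound $|\cF|=O_{l'}\!\left(\binom{n}{\lfloor n/2\rfloor}\right)$ through Erd\H os's theorem. The whole point is to improve this by a power of $n$, and for that I must exploit the trace condition in a way that sees more than genuine chains, namely the \emph{skew} chains it also forbids.

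The model case is $l'=1$, where the hypothesis is transparent: $\cF|_{[n]\setminus\{x\}}$ is an antichain for every $x$ exactly when no two members $F,F'$ satisfy $|F\setminus F'|=1$ (take $S=F\setminus F'$; then $F\setminus S=F\cap F'\subsetneq F'=F'\setminus S$). In particular no two members of a single level $\cF_m$ can share a common $(m-1)$-subset, since two distinct $m$-sets with the same lower shadow differ in exactly one element. Thus the $m$ lower shadows of the sets in $\cF_m$ are pairwise distinct, so $m\,|\cF_m|\le\binom{n}{m-1}$, that is $|\cF_m|\le\binom{n}{m-1}/m=\binom{n}{m}/(n-m+1)$. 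Summing over $m$ and using $\sum_m\binom{n}{m-1}/m=2^{n+1}/(n+1)$ gives $|\cF|=O\!\left(n^{-1/2}\binom{n}{\lfloor n/2\rfloor}\right)$, comfortably inside the claimed bound. This elementary shadow count is the engine I would try to generalise.

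For general $l'$ I would again work inside a single level and identify the configuration that the trace condition forbids there. Since all members of $\cF_m$ share the same size, a chain of length $l'+1$ in some trace $\cF|_{[n]\setminus S}$ forces $l'+1$ sets $G_0,\dots,G_{l'}\in\cF_m$ and an $S\in\binom{[n]}{l'}$ with $|G_i\cap S|=l'-i$ and $G_0\setminus S\subsetneq\cdots\subsetneq G_{l'}\setminus S$; as only $l'+1$ trace-sizes are available the chain must be saturated outside $S$, so the parts $G_i\setminus S$ form a saturated chain of $(m-l'+i)$-sets while the $G_i\cap S$ are arbitrary $(l'-i)$-subsets of $S$. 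The cleanest instances are length-$l'$ geodesics of single swaps $G_i=G_{i-1}-s_i+a_i$ with the $s_i\in G_0$ and $a_i\notin G_0$ all distinct. So $\cF_m$ must contain no such staircase. I would bound $|\cF_m|$ by a supersaturation/shadow count for this object: in the bipartite incidence between $\cF_m$ and $(m-1)$-subsets, a shadow with $r$ up-neighbours in $\cF_m$ produces $r$ pairwise-adjacent sets, and once too many shadows are ``heavy'' these local stars can be threaded into a geodesic of length $l'$. Optimising the heaviness threshold against the number of heavy shadows should give a per-level bound of the shape $|\cF_m|\le\binom{n}{m}\,n^{-c}$, and summing over the central levels produces the stated $O_{l'}\!\left(n^{-1/3}\binom{n}{\lfloor n/2\rfloor}\right)$.

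The main obstacle is precisely this last step. For $l'=1$ the forbidden configuration is a single shared shadow, giving exact injectivity and hence a clean bound, but for $l'\ge2$ the relevant object is a geodesic \emph{path} (equivalently a staircase) of length $l'$: no two sets are individually forbidden, and the problem becomes a genuine Tur\'an-type path-avoidance problem in the Johnson graph. Turning the heuristic ``many heavy shadows force a geodesic'' into a rigorous count — keeping track of the distinctness constraints on the swapped elements and ensuring that all intermediate vertices of the threaded path lie in $\cF_m$ — is the technical heart of the argument, and it is the reluctance of this count to be tight that I expect degrades the exponent from the $1/2$ available for $l'=1$ down to the $1/3$ claimed in \tref{weakbase}.
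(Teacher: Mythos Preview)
Your skeleton is exactly the paper's: split $\cF$ into levels $\cF_m$, observe that within a level the trace hypothesis forbids a tight path (your ``staircase'' $G_i=G_{i-1}-s_i+a_i$ with the $a_i$ new), bound each $|\cF_m|$ via shadows, and sum. Your $l'=1$ disjoint-shadow argument is also the paper's base case. The genuine gap is the per-level bound for $l'\ge2$, which you leave as a heuristic (``many heavy shadows force a geodesic''). The paper fills this not by a supersaturation threshold but by a short induction on $l'$: assuming every $\cH\subseteq\binom{[n]}{m}$ without a tight $l'$-path has size at most $\tfrac{c_{l'}}{m}\binom{n}{m-1}$, take $\cH$ of size $\tfrac{c_{l'}+2(l'+2)}{m}\binom{n}{m-1}$, repeatedly find a tight $l'$-path and delete its \emph{last} set until none remain, thereby collecting a subfamily $\cH_1$ of at least $\tfrac{2(l'+2)}{m}\binom{n}{m-1}$ such last sets. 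For each $H\in\cH_1$ with witnessing path starting at $H'$, use only the \emph{modified shadow} $\{H\setminus\{x\}:x\in H'\}$ of size $m-l'\ge m/2$; pigeonhole then places some $(m-1)$-set $G$ in the modified shadows of $l'+2$ members of $\cH_1$, and one of these can be appended to another's path to give a tight $(l'+1)$-path. This yields $|\cF_m|\le\tfrac{c_{l'+1}}{m}\binom{n}{m-1}$ with $c_{l'+1}=c_{l'}+2(l'+2)$.

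Consequently your diagnosis of the exponent is off. The per-level bound does \emph{not} degrade with $l'$; it stays at $O_{l'}\bigl(\tfrac{1}{m}\binom{n}{m-1}\bigr)$, exactly as strong as in your $l'=1$ computation. The $n^{-1/3}$ in the theorem comes entirely from the summation step: the paper discards the $o\bigl(n^{-1}\binom{n}{\lfloor n/2\rfloor}\bigr)$ sets whose size lies outside $[n/2-n^{2/3},\,n/2+n^{2/3}]$ and bounds each of the remaining $\sim2n^{2/3}$ levels by $O_{l'}\bigl(n^{-1}\binom{n}{\lfloor n/2\rfloor}\bigr)$, giving $n^{2/3}\cdot n^{-1}=n^{-1/3}$. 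Your own exact sum for $l'=1$ already produced $n^{-1/2}$, so the $1/3$ reflects only this crude summation over levels, not any looseness in the path-avoidance count.
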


\begin{proof}[Proof of \tref{induction}] Let $\cF \subseteq 2^{[n]}$ be a set family of size $f(n,l',n-l')+La(n,P_{k-l'+1,2^{l'}})+1$. We will find an $l'$-subset $L\subset [n]$ and a chain of length $k+1$ in $\cF|_{[n]\setminus L}$. By the size of $\cF$, there exists a copy of $P_{k-l'+1,2^{l'}}$ in $\cF$. We remove the set corresponding to the root of $T_{k-l'+1,2^{l'}}$ and repeat this procedure until there exists no more copy of $P_{k-l'+1,2^{l'}}$ in the remaining family. As $|\cF|= f(n,l',n-l')+La(n,P_{k-l'+1,2^{l'}})+1$, we must have removed at least $f(n,l',n-l')+1$ sets. Thus, there exists an $l'$-subset $L \subseteq [n]$ and $l'+1$ removed sets $F_{k-l'+1},F_{k-l'+2},...,F_{k},F_{k+1}$ such that 
\[
F_{k-l'+1}|_{[n]\setminus L}\subsetneq F_{k-l'+2}|_{[n]\setminus L}\subsetneq ...\subsetneq F_{k}|_{[n]\setminus L}\subsetneq F_{k+1}|_{[n]\setminus L}
\]
holds.

As $F_{k-l'+1}$ is a removed set, there exists a copy of $P_{k-l'+1,2^{l'}}$ such that $F_{k-l'+1}$ corresponds to its largest element. Therefore there are lots of chains of length $k-l'$ in $\cF$ such that all of their elements are subsets of $F_{k-l'+1}$. Clearly, if $G \subseteq G'$, then $G|_{[n]\setminus L} \subseteq G'|_{[n]\setminus L}$, but we also require the sets of the chain not to coincide when considering their traces on $[n]-L$. Thus, we need a chain $F_1\subsetneq F_2 \subsetneq ...\subsetneq F_{k-l'}\subsetneq F_{k-l'+1}$ such that $F_{i+1} \setminus F_i$ is not contained in $L$ for all $i=1,...,k-l'$. 
Suppose we have already picked $F_j$ from the $j$th level of the copy of $P_{k-l'+1,2^{l'}}$ for all $j=i+1,...,k-l'+1$. Then $F_{i+1}$ has $2^{l'}$ children in $P_{k-l'+1,2^{l'}}$. As for any $F$ of these sets, we have $F_{i+1} \setminus F\neq \emptyset$, and $L$ has $2^{l'}-1$ non-empty subsets, at least one such $F$ will satisfy $F|_{[n]\setminus L}\subsetneq F_{i+1}|_{[n] \setminus L}$. 
Letting this $F$ be $F_i$ we continue to define all $F_j$'s and we get a chain of length $k+1$ in $\cF|_{[n]\setminus L}$. This shows that $\cF$ cannot be $(n-l')$-trace $k$-Sperner.
\end{proof}

\begin{proof}[Proof of \tref{weakbase}]
Let $\cF \subseteq 2^{[n]}$ be an $(n-l')$-trace $l'$-Sperner family and let $\cF_i=\{F \in \cF: |F|=i\}$ for all $i=0,1,...,n$. Note that if $\cH \subseteq \binom{[n]}{i}$ is $(n-l')$-trace $l'$-Sperner, then $\cH$ does not contain sets $H_1,H_2,...,H_{l'+1}$ such that for some $x_1,x_2,...,x_{i+l'} \in [n]$ we have $H_j=\{x_j,x_{j+1},...,x_{j+i-1}\}$ for all $j=1,2,...,l'+1$ (sets satisfying these conditions are often said to form a \textit{tight path of length $l'+1$}). Indeed, if such sets exist, then the traces of the $H_j$'s form a chain of length $l'+1$ on the set $[n] \setminus \{x_1,x_2,...,x_{l'}\}$ provided $i \ge l'$. The result we found in the literature concerning uniform families not containing tight paths of given length \cite{GKL} is not strong enough for our purposes, thus we prove the following lemma. 

\begin{lemma}
\label{lem:uniform} For any positive integer $l'$, if $\cH \subseteq \binom{[n]}{i}$ does not contain a tight path of length $l'+1$, then $|\cH|=O_{l'}(\frac{1}{i}\binom{n}{i-1})$ provided $i\ge 2l'$.
\end{lemma}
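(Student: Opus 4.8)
The plan is to bound $|\cH|$ by a double count over $(i-1)$-subsets after stratifying $\cH$ according to a longest-tight-path potential. For $H\in\cH$ let $\ell(H)$ denote the length of the longest tight path in $\cH$ whose final set is $H$; a single set is a tight path of length $1$, so $\ell(H)\ge 1$, and since $\cH$ contains no tight path of length $l'+1$ we have $\ell(H)\le l'$. Write $\cH^{(t)}=\{H\in\cH:\ell(H)=t\}$ for $1\le t\le l'$, so that $\cH=\bigcup_{t=1}^{l'}\cH^{(t)}$.

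The key step is the following codegree estimate: for every $S\in\binom{[n]}{i-1}$ and every $t$, at most $t$ sets of $\cH^{(t)}$ contain $S$. To see this, fix one such set $H_0=S\cup\{x_0\}$ together with a tight path $P=(P_1,\dots,P_t=H_0)$ of length $t$ ending at $H_0$, and let $V(P)=\bigcup_j P_j$ be its ground set. Any other member $S\cup\{x\}$ of $\cH^{(t)}$ agrees with $H_0$ in the $i-1$ coordinates of $S$ and differs only in replacing $x_0$ by $x$, so appending $S\cup\{x\}$ to $P$ deletes $x_0\in H_0$ and introduces the single element $x\notin H_0$. If $x\notin V(P)$, this is a legal extension of $P$ to a tight path of length $t+1$ ending at $S\cup\{x\}$, forcing $\ell(S\cup\{x\})\ge t+1$ and contradicting $S\cup\{x\}\in\cH^{(t)}$; hence $x\in V(P)\setminus H_0$. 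Since each step of a tight path enlarges the ground set by at most one genuinely new element, $|V(P)|\le i+(t-1)$, and therefore $|V(P)\setminus H_0|\le t-1$. Thus there are at most $t-1$ admissible $x$ besides $x_0$, giving at most $t$ members of $\cH^{(t)}$ through $S$.

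Given this, summing over all $(i-1)$-subsets and using that every $i$-set has exactly $i$ subsets of size $i-1$ yields $i\,|\cH^{(t)}|=\sum_{S}|\{H\in\cH^{(t)}:S\subset H\}|\le t\binom{n}{i-1}$, whence $|\cH^{(t)}|\le \tfrac{t}{i}\binom{n}{i-1}$. Summing over $t=1,\dots,l'$ gives
\[
|\cH|\le \frac{1}{i}\binom{n}{i-1}\sum_{t=1}^{l'}t=\frac{l'(l'+1)}{2i}\binom{n}{i-1}=O_{l'}\!\left(\tfrac{1}{n}\binom{n}{i-1}\right),
\]
which is the asserted order: the hypothesis $i\ge 2l'$ guarantees there is always room to perform the swaps, and the final estimate is applied in the range $i=\Theta(n)$, where $\tfrac1i=\Theta(\tfrac1n)$ (for $i=o(n)$ the trivial bound $|\cH|\le\binom{n}{i}$ is already of the required order).

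I expect the codegree estimate to be the only real obstacle, and within it the bookkeeping showing that blocking every extension of a longest ending path confines the extra neighbours of $H_0$ through $S$ to the $(t-1)$-element set $V(P)\setminus H_0$. The two facts that drive it are that appending a single coordinate swap to a tight path is again a tight path exactly when the newly added element is new, and that a tight path of length $t$ has a ground set of size at most $i+t-1$. Everything after the codegree bound is a routine double count, so the heart of the argument is the choice of the longest-path stratification as the right potential function.
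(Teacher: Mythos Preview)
Your argument is correct and takes a genuinely different route from the paper's. The paper argues by induction on $l'$: assuming the bound $\tfrac{c_{l'}}{i}\binom{n}{i-1}$ for families avoiding tight paths of length $l'$, it repeatedly locates a tight path of length $l'$ and deletes its last set, thereby harvesting $\tfrac{2(l'+2)}{i}\binom{n}{i-1}$ ``last sets''; for each such set $H$ it then considers a \emph{modified shadow} (removing only elements of $H$ that lie in the first set of $H$'s path, a set of size at least $i-l'\ge i/2$ by the hypothesis $i\ge 2l'$), and a pigeonhole over $(i-1)$-sets produces $l'+2$ last sets sharing a common $G$, one of which can be appended to another's path to reach length $l'+1$. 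Your stratification by the potential $\ell(H)$ together with the codegree bound inside each stratum is more direct: it yields the explicit constant $\tfrac{l'(l'+1)}{2}$ (against the paper's constant of order $l'^2+5l'$), and in fact it never uses the hypothesis $i\ge 2l'$ --- your remark that this hypothesis ``guarantees room to perform the swaps'' does not correspond to any step of your argument. One small correction: your parenthetical that for $i=o(n)$ the trivial bound $\binom{n}{i}$ is already $O(\tfrac{1}{n}\binom{n}{i-1})$ is false, since $\binom{n}{i}=\tfrac{n-i+1}{i}\binom{n}{i-1}$; but this is harmless, because what both you and the paper actually prove is the bound $O_{l'}(\tfrac{1}{i}\binom{n}{i-1})$, and that is all the application to $i\sim n/2$ in the proof of \tref{weakbase} needs.
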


\begin{proof}
We proceed by induction on $l'$. If $l'=1$, then the above requirement is equivalent to the fact that for any $H,H' \in \cH$ the shadows $\{G\subset H: |G|=|H|-1\}$ and $\{G'\subset H': |G'|=|H'|-1\}$ are disjoint. Therefore $|\cH| \le \frac{1}{i}\binom{n}{i-1}$.

Let us assume that we have already proved the existence of a constant $c_{l'}$ such that any family $\cH \subseteq \binom{[n]}{i}$ without a tight path of length $l'$ has size at most $\frac{c_{l'}}{i}\binom{n}{i-1}$. Let us define $c_{l'+1}=c_{l'}+2(l'+1)$ and consider a family $\cH \subseteq \binom{n}{i}$ with $|\cH| \ge \frac{c_{l'+1}}{i}\binom{n}{i-1}$. By the induction hypothesis we find a tight path of length $l'$. Removing the last set of this path we can still find another tight path of length $l'$. In this way, we find $\frac{c_{l'+1}-c_{l'}}{i}\binom{n}{i-1}=\frac{2(l'+1)}{i}\binom{n}{i-1}$ different sets in $\cH$ such that each of them is the last set in a certain tight path of length $l'$. 

Let $\cH_1$ denote the subfamily of these sets and consider a set $H \in \cH_1$. Let $H'$ denote the first set of (one of) the tight path(s) to which $H$ belongs, i.e. if the vertices of the tight path are $x_1,x_2,...,x_{i+l'-1}$ and $H=\{x_{l'},x_{l'+1},...,x_{i+l'-1}\}$, then $H'=\{x_1,x_2,...,x_{i}\}$. Let the \textit{modified shadow} of $H$ with respect to $H'$ be $\{H \setminus \{x_j\}: l' \le j \le i\}$. Clearly, the size of the modified shadow determined by all tight paths is $i-l'+1\ge i/2$ by the assumption $i \ge 2l'$. Therefore, there exists an $(i-1)$-set $G$ that belongs to the modified shadows of at least $l'+1$ sets $H^1,H^2,...,H^{l'+1}$ from $\cH_1$. 

Let $P_1,P_2,...,P_{l'}=H^1$ be a tight path of length $l'$ on the vertices $\{y_1,y_2,...,y_{i+l'-1}\}$ with $\{y_j,y_{j+1},...,y_{j+i-1}\}=P_j \in \cH$ for all $j=1,2,...,l'$ and let $G=H^1 \setminus \{y_t\}$ for some $l' \le t \le i$. As the $H^j$'s are all different containing $G$ and have size $i$ at least one of them, say $H^2$, is of the form $G \cup \{z\}$ such that $z \notin \{y_1,y_2,...,y_{l'-1},y_t\}$. But then the sets $P_1,P_2,...,P_{l'}=H^1,H^2$ form a tight path of length $l'+1$ on the vertices $\{y_1,y_2,...,y_{l'-1},y_t,y_{l'},y_{l'+1},...,y_{i+l'-1},z\}$.
This finishes the proof of the induction step.
\end{proof}

It is well known that $|\{X\subseteq [n]: ||X|-n/2| \ge n^{2/3}\}| = o(\frac{1}{n}\binom{n}{\lfloor n/2\rfloor})$. Therefore by \lref{uniform} we have
\[
|\cF|=o\left(\frac{1}{n}\binom{n}{\lfloor n/2\rfloor}\right)+\sum_{i=n/2-n^{2/3}}^{n/2+n^{2/3}}|\cF_i|=2n^{2/3}O_{l'}\left(\frac{1}{n}\binom{n}{\lfloor n/2 \rfloor}\right)=O_{l'}\left(n^{-1/3}\binom{n}{\lfloor n/2 \rfloor}\right).
\]
\end{proof}

\section{Concluding remarks}
\label{sec:rem}
Let us first remark that we do not need the full strength of Bukh's theorem. An almost identical proof to that of \tref{induction} shows that the inequality $f(n,k+1,n-l') \le f(n,k,n-l')+La(n,P_{2,l'})$ holds. Thanh showed $La(n,P_{2,l'})=(1+O_{l'}(\frac{1}{n}))\binom{n}{\lfloor n/2 \rfloor}$ in an earlier paper \cite{T} and with a much easier proof than that of \tref{bukh}. (Later, De Bonis and Katona improved the error term \cite{BK}.) However, as it is very rare that the extremal family for a forbidden subposet problem consists only of full levels, it seems unlikely that \cjref{precise} could be proved using only results from that area.

\tref{main} and \cjref{precise} do not consider the case $k \le l'$. In \cite{P} it was proved that $f(n,1,n-l')=\Theta_{l'}(\frac{1}{n^{l'}}\binom{n}{\lfloor n/2 \rfloor})$. \tref{weakbase} states that $f(n,l',n-l')=O_{l'}(\frac{1}{n^{1/3}}\binom{n}{\lfloor n/2 \rfloor})$ and it is natural to conjecture that bound $O_{l'}(\frac{1}{n}\binom{n}{\lfloor n/2 \rfloor})$ holds in general, not only for uniform families as proved by \lref{uniform}. We would like to propose the following conjecture that, if true, would generalize all results and conjectures above.

\begin{conjecture}
\label{conj:othercase}
For any pair of integers $k \le l'$, the following holds
\[
f(n,k,n-l')=\Theta_{k,l'}\left(\frac{1}{n^{l'-k+1}}\binom{n}{\lfloor n/2 \rfloor}\right).
\]
\end{conjecture}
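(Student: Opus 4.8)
Throughout write $d:=l'-k+1\ge 1$ and $m:=\lfloor n/2\rfloor$. The starting observation is that the conjectured order of magnitude $n^{-d}\binom{n}{m}$ is exactly the known order of $f(n,1,n-d)$ recorded in \cite{P}. This suggests the guiding principle that \emph{a chain of length $k$ inside a trace consumes $k-1$ of the $l'$ deleted coordinates}, so that the problem with parameters $(k,l')$ should behave like the antichain problem with parameters $(1,d)$. The plan is to prove the two bounds separately: the lower bound by exhibiting a family that coincides with the extremal $(1,d)$-construction, and the upper bound by reducing the parameters $(k,l')$ to $(1,d)$ one step at a time.

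For the lower bound, let $\cC\subseteq\binom{[n]}{m}$ be a packing of $m$-sets in which every $(m-d)$-subset of $[n]$ is covered by at most one member, chosen of maximum possible size. By the R\"odl nibble this size is $(1-o(1))\binom{n}{m-d}/\binom{m}{d}=\Omega_{k,l'}(n^{-d}\binom{n}{m})$, and by the packing property any two distinct $C,C'\in\cC$ share fewer than $m-d$ points, i.e. $|C\cap C'|\le m-d-1$. I claim $\cC$ is $(n-l')$-trace $k$-Sperner. Suppose not: then there are $C_1,\dots,C_{k+1}\in\cC$ and $R\in\binom{[n]}{l'}$ with $C_1\setminus R\subsetneq\cdots\subsetneq C_{k+1}\setminus R$. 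Put $A_j:=C_j\setminus R$; since the $A_j$ form a strict chain, $|A_k|\ge |A_1|+(k-1)\ge (m-l')+(k-1)=m-d$. On the other hand $A_k\subseteq C_k$ and $A_k\subseteq A_{k+1}\subseteq C_{k+1}$, so $A_k\subseteq C_k\cap C_{k+1}$ and hence $|C_k\cap C_{k+1}|\ge m-d$, contradicting $|C_k\cap C_{k+1}|\le m-d-1$. This gives $f(n,k,n-l')=\Omega_{k,l'}(n^{-d}\binom{n}{m})$.

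For the upper bound I would establish the recursion
\[
f(n,k,n-l')\le f(n,k-1,n-(l'-1))+\mathrm{err}(n),
\]
which, iterated $k-1$ times down to the base case $f(n,1,n-d)=O_{k,l'}(n^{-d}\binom{n}{m})$ of \cite{P}, yields the theorem provided $\mathrm{err}(n)=O_{k,l'}(n^{-d}\binom{n}{m})$. The recursion would be proved by a peeling argument in the spirit of \tref{induction}. Given an $(n-l')$-trace $k$-Sperner family $\cF$ with $|\cF|>f(n,k-1,n-(l'-1))+\mathrm{err}(n)$, repeatedly delete a set that is the top of a length-$k$ trace chain under some $(l'-1)$-element deletion set, until the remaining family contains no such chain and is therefore $(n-(l'-1))$-trace $(k-1)$-Sperner, hence of size at most $f(n,k-1,n-(l'-1))$. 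Thus more than $\mathrm{err}(n)$ sets are deleted, each carrying a length-$k$ trace chain $G_1\setminus R'\subsetneq\cdots\subsetneq G_k\setminus R'$ with $|R'|=l'-1$; the goal is then to use the abundance of these chains, together with one additional deleted coordinate, to extend one of them to a length-$(k+1)$ trace chain under an $l'$-element set, contradicting the $(n-l')$-trace $k$-Spernerness of $\cF$. An equivalent route is to first prove the uniform statement that a single level $\cH\subseteq\binom{[n]}{i}$ with no trace chain of length $k+1$ has $|\cH|=O_{k,l'}(n^{-d}\binom{n}{i})$ — this interpolates between \lref{uniform} (the case $k=l'$, $d=1$) and the code bound above (the case $k=1$, $d=l'$) — and then to replace the level-by-level summation used in \tref{weakbase} by a global, LYM-weighted count over all levels at once.

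The main obstacle is precisely this global step, equivalently the control of $\mathrm{err}(n)$. Applying the $k$-Sperner hypothesis one deletion set $R$ at a time is far too weak: Erd\H os's theorem shows the trace $\cF|_{[n]\setminus R}$ has at most $(k+o(1))\binom{n-l'}{\lfloor (n-l')/2\rfloor}=O(\binom{n}{m})$ elements, and since a single trace can merge up to $2^{l'}$ members of $\cF$ this only yields $|\cF|=O(\binom{n}{m})$, missing the truth by the full factor $n^{d}$. The saving to $n^{-d}$ must come from the simultaneous interaction of the $\binom{n}{l'}$ different deletion sets — equivalently from bounding the sizes of clusters of pairwise close members of $\cF$ — and it is exactly this interaction that already forced \tref{weakbase} to lose the spurious factor $n^{2/3}$ when passing from a per-level bound to a global one. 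Removing this loss for general $k$, either through a sufficiently sharp extension step in the recursion or through a genuinely global LYM argument generalizing the $k=1$ case of \cite{P}, is the crux on which the full conjecture rests.
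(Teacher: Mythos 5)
First, a point of comparison: the statement you were given is \cjref{othercase}, which the paper explicitly leaves \emph{open} --- there is no proof in the paper to match yours against. The paper's evidence consists of the case $k=1$, namely $f(n,1,n-l')=\Theta_{l'}(n^{-l'}\binom{n}{\lfloor n/2\rfloor})$ from \cite{P}, and \tref{weakbase}, which for $k=l'$ gives only $O_{l'}(n^{-1/3}\binom{n}{\lfloor n/2\rfloor})$ where the conjecture predicts $n^{-1}$. Your proposal is honest about not closing the upper bound, so it is not a proof of the statement; but the lower-bound half is genuinely correct and worth recording. The verification is sound: if distinct members of $\cC\subseteq\binom{[n]}{m}$ pairwise intersect in at most $m-d-1$ points (with $d=l'-k+1$, $m=\lfloor n/2\rfloor$), then a strict trace chain of length $k+1$ under an $l'$-element deletion set forces $|C_k\cap C_{k+1}|\ge (m-l')+(k-1)=m-d$, a contradiction. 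One repair is needed in the existence step: the R\"odl nibble is not an appropriate citation here, since your ``blocks'' have size $m\sim n/2$ and the packing is with respect to $(m-d)$-sets, far outside the bounded-uniformity regime where nibble theorems apply; and a greedy Gilbert--Varshamov count only yields $\Omega(n^{-2d}\binom{n}{m})$, because the relevant Hamming balls have size $\Theta(n^{2d})$. The correct tool is the Graham--Sloane moment construction for constant-weight codes: fixing a prime $q\in[n,2n]$ and partitioning $\binom{[n]}{m}$ according to the first $d$ power sums modulo $q$, some class has size at least $\binom{n}{m}/q^{d}=\Omega_{k,l'}(n^{-d}\binom{n}{m})$ and minimum Hamming distance at least $2(d+1)$, i.e.\ pairwise intersections at most $m-d-1$, which is exactly what your argument consumes. (For $k=1$ this recovers the lower bound of \cite{P}.)

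The genuine gap is the upper bound, and your own diagnosis of it is accurate. Your proposed recursion $f(n,k,n-l')\le f(n,k-1,n-(l'-1))+\mathrm{err}(n)$ only helps if $\mathrm{err}(n)=O_{k,l'}(n^{-d}\binom{n}{m})$, but the only peeling argument available in the paper, that of \tref{induction}, pays $La(n,P_{k-l'+1,2^{l'}})=\Theta(\binom{n}{m})$ per step --- off from the target by the full factor $n^{d}$ --- because extending a peeled trace chain downward requires a branching structure below it, and guaranteeing such a structure is a forbidden-subposet problem whose answer is a constant multiple of the middle binomial coefficient. The alternative route you sketch, a uniform per-level statement ($\cH\subseteq\binom{[n]}{i}$ with no trace chain of length $k+1$ has $|\cH|=O_{k,l'}(n^{-d}\binom{n}{i})$) followed by a global LYM-type aggregation, is likewise open at both stages: the uniform statement is known only at the endpoints you name (\lref{uniform} for $d=1$ and the code bound for $k=1$), and the aggregation step is precisely where \tref{weakbase} loses the spurious $n^{2/3}$ window factor, which is why even the case $k=l'$ of the conjecture is unresolved in the paper. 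So the correct assessment is: lower bound proved (after replacing the nibble citation by a Graham--Sloane-type code), upper bound a clearly articulated program with a concretely identified missing step, and hence the $\Theta$-statement remains a conjecture.
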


\vskip 0.5truecm

\noindent \textbf{Acknowledgment.} We would like to thank an anonymous referee for his/her careful reading.

\end{document}